\theoremstyle{plain}
\newtheorem*{theorem*}{Theorem}
\newtheorem*{lemma*} {Lemma}
\newtheorem*{corollary*} {Corollary}
\newtheorem*{proposition*} {Proposition}
\newtheorem*{conjecture*}{Conjecture}
\newtheorem{theorem}{Theorem}[section]
\newtheorem{lemma}[theorem]{Lemma}
\newtheorem{proposition}[theorem]{Proposition}
\newtheorem{question}[theorem]{Question}
\theoremstyle{definition}
\theoremstyle{remark}
\newtheorem*{remark}{Remark}
\newtheorem*{ack}{Acknowledgement}
\theoremstyle{definition}
\def\Z{\Bbb{Z}}
\def\R{\Bbb{R}}
\def\C{\Bbb{C}}
\def\p{\partial}
\def\be{\begin{equation}} \def\ee{\end{equation}}
\def\sm{\setminus}
\def\bp{\begin{pmatrix}}
\def\ep{\end{pmatrix}}
\def\bn{\begin{enumerate}}
\def\en{\end{enumerate}}
\def\ba{\begin{array}}
\def\ea{\end{array}}
\def\fr12{\frac{1}{2}}
\def\ol{\overline}
\def\to{\mathchoice{\longrightarrow}{\rightarrow}{\rightarrow}{\rightarrow}}
\newcommand{\shortxra}[2][]{\ext@arrow 0359\rightarrowfill@{#1}{#2}}
\def\longrightarrowfill@{\arrowfill@\relbar\relbar\longrightarrow}
\newcommand{\longxra}[2][]{\ext@arrow 0359\longrightarrowfill@{#1}{#2}}
\begin{document}

\title[Topological $\mu$-constant problem]{A note on a topological approach to the $\mu$-constant problem in dimension 2}

\date{\today}

\author{Maciej Borodzik}
\address{Institute of Mathematics, University of Warsaw, Warsaw, Poland}
\email{mcboro@mimuw.edu.pl}

\author{Stefan Friedl}
\address{Mathematisches Institut\\ Universit\"at zu K\"oln\\   Germany}
\email{sfriedl@gmail.com}

\def\subjclassname{\textup{2010} Mathematics Subject Classification}
\expandafter\let\csname subjclassname@1991\endcsname=\subjclassname
\expandafter\let\csname subjclassname@2000\endcsname=\subjclassname

\subjclass{primary: 32S15; secondary: 57M25, 57R65, 57R80}
\keywords{$\mu$-constant problem, graph manifold, deformation of singular points, Milnor number, cobordism of manifolds}
\thanks{The first author is supported by Polish OPUS grant No 2012/05/B/ST1/03195}

\begin{abstract}
We provide an example, which shows that studying homological
and homotopical properties of cobordisms between arbitrary, that is not necessarily negative, graph manifolds 
is not enough to prove the $\mu$-constant conjecture of
L\^e D\~ung Tr\'ang in complex dimension 2.
\end{abstract}

\maketitle

%===========================================

\section{Introduction}
\subsection{Background}
The study of equisingularity is one of the main questions in  singularity theory. The systematic study dates back to Zariski
\cite{Za65a,Za65b,Za68}. One of the milestones is the fact that if the Milnor number $\mu$ is constant
under the deformation of isolated hypersurface singularities in $\C^{n+1}$, and  if $n\neq 2$, then the deformation is topologically trivial. This fact
was proved by L\^e, and L\^e--Ramanujam in the series of papers \cite{Le71,Le72,LR76}. The case $n=1$ is simple and relies on a full
classification of the singularities of plane curves (compare \cite{Za65a}), while the case $n>2$ uses the $h$-cobordism theorem and the fact,
that links of isolated hypersurface singularities in $\C^{n+1}$ for $n>2$ are simply connected.  In particular, the
proof in the case $n>2$ is purely topological. The
problem for $n=2$ has remained open for 40 years.

There were attempts to solving the $\mu$-constant conjecture using a topological approach, i.e. studying the cobordism of links of singularities.
Perron and Shalen in \cite{PS99} proved the $\mu$-constant conjecture under an additional hypothesis on fundamental groups of the links.
They use a detailed study of graph manifolds and a deep understanding of the cobordism between graph manifold. A natural question that arises is:

\smallskip
\emph{Can one prove the $\mu$-constant conjecture in dimension $2$ using only properties of cobordisms of graph manifolds?}

\smallskip
A precise formulation of the above question is given in Question~\ref{qu:estion2}. In this note we show, that if one admits graph manifolds that
are not negative (that is, are not boundaries of negative definite plumbed manifolds), then the answer to Question~\ref{qu:estion2} is negative. As
links of singularities are negative definite,
the counterexample that we give,
does not imply that  topological arguments alone are insufficient to prove the $\mu$-constant conjecture in dimension $2$. However, it indicates
that topological approaches must take into account negative definiteness of graph manifolds.

%====================
\subsection{The $\mu$-constant problem}

We begin with the following formulation of the $\mu$-constant problem. We refer to \cite{Te76,GLS06} for background material on
deformations and equisingularity questions.

\begin{question}[The $\mu$-constant problem]\label{qu:estion}
Suppose we are given a family of complex polynomial functions $F_t\colon(\C^{n+1},0)\to(\C,0)$ smoothly depending on
a parameter $t\in D\subset\C$, where $D$ is a unit disk. Assume that for each $t$, the hypersurface $X_t=F_t^{-1}(0)$
has an isolated hypersurface singularity at $0\in\C^{n+1}$. Let $\mu_t$ be the Milnor number of the singularity of $X_t$ at $0$.
If $\mu_t$ is a constant function of $t$, does it imply that the topological type of the singularity of $X_t$ at $0$ does not depend on $t$?
\end{question}

The results \cite{Le71,Le72,LR76} can be resumed as follows.

\begin{theorem}
Question~\ref{qu:estion} has an affirmative answer if $n=1$ or $n\geq 3$.
\end{theorem}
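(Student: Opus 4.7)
The plan is to treat the two cases $n=1$ and $n\geq 3$ by very different methods, since the topological tools available in each dimension are quite different.

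For $n=1$ the approach is purely combinatorial, relying on the complete topological classification of plane curve singularities. First I would recall that the embedded topological type of a plane curve germ is encoded by its Puiseux characteristic pairs (equivalently, by the multiplicity sequence of the minimal embedded resolution, or the Eisenbud--Neumann splice diagram). Next I would invoke the Milnor-number formula $\mu = 2\delta - r + 1$ together with the explicit expression of the $\delta$-invariant in terms of the characteristic pairs, to express $\mu$ as a strictly monotone function of those pairs. The key step is then to show that in a family $F_t$, the characteristic data is upper semicontinuous in $t$, and that any jump strictly decreases $\mu$; constancy of $\mu_t$ therefore forces the characteristic data to be locally constant, hence the embedded topological type is preserved.

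For $n\geq 3$ the plan is the L\^e--Ramanujam approach via the $h$-cobordism theorem. For $|t|$ small and a suitable radius $\vare$, one first checks that $L_t = F_t^{-1}(0) \cap S^{2n+1}_\vare$ is a link representing the singularity of $X_t$ at $0$, and that varying $t$ in a small subdisk produces a smooth cobordism $W$ between $L_{t_0}$ and $L_{t_1}$, with an associated cobordism $V$ between the corresponding Milnor fibres. The next step is to use the $\mu$-constant hypothesis to verify that the middle Betti numbers of the fibres agree, which upgrades $V$ (and then $W$) to an $h$-cobordism at the level of homology. Milnor's theorem gives that each $L_t$ is $(n-1)$-connected, so for $n\geq 3$ the links $L_t$ are simply connected; a standard argument (general position, handle cancellation in a codimension $\geq 3$ setting) extends simple connectivity to $W$. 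Since $\dim W = 2n+1 \geq 7$, Smale's $h$-cobordism theorem applies and yields a diffeomorphism $W \cong L_{t_0}\times [0,1]$, which can be extended to an ambient isotopy of the ball identifying the singularity germs of $F_{t_0}$ and $F_{t_1}$.

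The main obstacle lies in Case $n\geq 3$: one must arrange simple connectivity not merely of the ends $L_{t_0}, L_{t_1}$ but of the full cobordism $W$ (together with its Milnor-fibre companion $V$). This uses crucially that $n-1\geq 2$, so that codimension arguments can kill $\pi_1(W)$. For $n=2$ the links are $3$-manifolds, the cobordism $W$ is $5$-dimensional, and both simple connectivity and the $h$-cobordism theorem fail in general; this dimensional obstruction is exactly the phenomenon that the rest of the present note is designed to probe.
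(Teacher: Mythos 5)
The paper does not actually prove this theorem --- it quotes it from L\^e and L\^e--Ramanujam --- and the strategy it sketches (the classification of plane curve singularities for $n=1$; the $h$-cobordism theorem together with simple connectivity of the links for $n>2$) is exactly the strategy you outline, so your proposal follows the intended route. Two caveats. First, Milnor's theorem gives that the \emph{link} is $(n-2)$-connected (it is the Milnor fibre that is $(n-1)$-connected); this off-by-one does not hurt you, since $(n-2)$-connectedness still yields simple connectivity of $L_t$ for $n\geq 3$, but as stated your connectivity claim is too strong. Second, in the $n=1$ case the ``key step'' you isolate --- that any jump of the Puiseux characteristic data forces a strict drop of $\mu$ --- is essentially the entire content of L\^e's and Zariski's theorem rather than a routine semicontinuity observation, so that sentence is where all the real work of \cite{Le71} and \cite{Za65a} is hiding; as a sketch this is acceptable, but it should not be presented as a standard fact.
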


A possible approach to the problem, and actually the one that is sufficient for cases $n\neq 2$ is the following. Let $B_0\subset\C^{n+1}$
be a small closed ball around $0$, such that $M_0:=X_0\cap\partial B_0$ is the link of singularity $(X_0,0)$. Let us pick $t\in\mathbb{C}$ sufficiently small
so that $X_t\cap\partial B_0$ is isotopic to $M_0$. Let us now choose a smaller ball $B_t\subset\C^{n+1}$ such $M_t:=X_t\cap\partial B_t$
is the link of the singularity $(X_t,0)$. Let $W=\ol{X_t\cap(B_0\sm B_t)}$. Then $W$ is a smooth manifold of real dimension $2n$ with
boundary $M_0\sqcup -M_t$.

The cobordism $W$ has various topological properties which we now summarize in the following proposition.

\begin{proposition}\label{prop:W0-W4}
The manifolds $(W,M_0,M_t)$ satisfy the following properties.
\begin{itemize}
\item[(W0)] $\dim_\R W=2n$, $\dim_\R M_0=\dim_\R M_t=2n-1$, furthermore $W,M_0$ and $M_t$ are compact and oriented.
\item[(W1)] If $n>2$, then $\pi_1(M_t)=\pi_1(M_0)=\{e\}$, if $n=2$, then the image of $\pi_1(M_t)$ in $\pi_1(W)$ normally generates $\pi_1(W)$.
\item[(W2)] $W$ can be built from $M_t\times[0,1]$ by adding handles
of indices $0,1,\dots,n$.
\item[(W3)] If $n=2$, then the manifolds $M_0$ and $M_t$ are oriented, irreducible, graph manifolds.
\end{itemize}
Furthermore, if we have the equality of Milnor numbers $\mu_t=\mu_0$, then the following additional fact is satisfied
\begin{itemize}
\item[(W4)] The maps $H_*(M_0;\Z)\to H_*(W;\Z)$ and $H_*(M_t;\Z)\to H_*(W;\Z)$ induced by inclusions are isomorphisms.
\end{itemize}
\end{proposition}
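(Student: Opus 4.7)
The four items can be handled mostly independently, though it is convenient to establish (W2) before (W1) and (W4). Item (W0) is immediate from the setup: the only singular point of $X_t$ lies in the interior of $B_t$, so $X_t$ is smooth on $B_0 \sm B_t$; for the chosen radii the intersections $X_t \cap \p B_0$ and $X_t \cap \p B_t$ are transverse, yielding smooth $(2n-1)$-dimensional boundary components, compactness is evident, and all orientations are induced from the complex structure on $X_t$. For (W2) I would use the Morse function $|z|^2$ restricted to $W$. Since $|z|^2$ is strictly plurisubharmonic on $\C^{n+1}$, so is its restriction to any complex submanifold, and a classical result (see Milnor's \emph{Morse theory} book) ensures that Morse critical points of a strictly plurisubharmonic function on a complex $n$-manifold have Morse index at most $n$. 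After a small perturbation to guarantee the Morse condition, and noting that the level sets at the extreme values of $|z|^2$ on $W$ are precisely $M_t$ and $M_0$, we obtain the required handle decomposition. Dually, $W$ is built from $M_0 \times [0,1]$ using handles of index at least $n$; this dual statement will be used for (W4).

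For (W1) with $n > 2$ I would cite Milnor's theorem that the link of an isolated hypersurface singularity in $\C^{n+1}$ is $(n-2)$-connected, so $\pi_1(M_0) = \pi_1(M_t) = 1$. For $n = 2$, I would exploit the decomposition $X_t \cap B_0 = (X_t \cap B_t) \cup W$ glued along $M_t$. The Milnor fibers $X_t \cap B_t$ and $X_t \cap B_0$ are both homotopy equivalent to bouquets of $n$-spheres, hence simply connected, so by the Seifert--van Kampen theorem
\begin{equation*}
1 \,=\, \pi_1(X_t \cap B_0) \,=\, \pi_1(X_t \cap B_t) \ast_{\pi_1(M_t)} \pi_1(W) \,=\, \pi_1(W) / \langle\langle \text{image of }\pi_1(M_t) \rangle\rangle,
\end{equation*}
which is exactly normal generation. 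Item (W3) I would dispatch by citing the classical theorem of Mumford and Neumann that the link of a normal isolated surface singularity is an orientable, irreducible graph manifold (the boundary of the minimal resolution, with the standard plumbing structure).

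The main work is (W4), where we assume $\mu_t = \mu_0 = \mu$. By excision, $H_\ast(W, M_t) \cong H_\ast(X_t \cap B_0, X_t \cap B_t)$. Since the reduced homology of each Milnor fiber is concentrated in degree $n$ where it is free abelian of rank $\mu$, the long exact sequence of the latter pair, together with the vanishing $H_{n+1}(W, M_t) = 0$ given by (W2), reduces to the short exact sequence
\begin{equation*}
0 \to H_n(X_t \cap B_t) \to H_n(X_t \cap B_0) \to H_n(W, M_t) \to 0,
\end{equation*}
and shows that $H_i(W, M_t) = 0$ for all $i \neq n$. Thus $H_n(W, M_t)$ is the cokernel of an injection between free abelian groups of the same rank, i.e., a finite torsion group. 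The key step, which is the main technical obstacle, is to force this torsion to vanish. Poincar\'e--Lefschetz duality gives $H_n(W, M_t) \cong H^n(W, M_0)$; the dual handle decomposition from (W2) (handles of index $\geq n$ from the $M_0$ side) yields $H_{n-1}(W, M_0) = 0$; and the universal coefficient theorem then identifies $H^n(W, M_0)$ with $\Hom(H_n(W, M_0), \Z)$, which is torsion-free. A group that is simultaneously torsion and torsion-free is trivial, so $H_\ast(W, M_t) = 0$. The vanishing of $H_\ast(W, M_0)$ follows by the analogous argument with the roles of $M_0$ and $M_t$ interchanged.
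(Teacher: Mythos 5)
Most of your proposal matches the paper's: (W0) is immediate, (W2) is the Andreotti--Frankel plurisubharmonicity argument, (W3) is Mumford--Neumann, (W1) for $n>2$ is Milnor's connectivity of the link, and the homological skeleton of your (W4) (relative homology finite by a rank count, then torsion-free by Poincar\'e--Lefschetz duality plus the dual handle decomposition, hence zero) is exactly the L\^e--Ramanujam argument the paper alludes to.

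There is, however, a genuine gap in your treatment of (W1) for $n=2$ and of (W4): you use the wrong spaces. The sets $X_t\cap B_t$ and $X_t\cap B_0$ are fibers of $F_t$ over the \emph{critical value} $0$, not Milnor fibers, and neither is a bouquet of $n$-spheres. The small piece $X_t\cap B_t$ is the cone on $M_t$, hence contractible (so $H_n(X_t\cap B_t)=0$, not $\Z^\mu$), and the homotopy type of the singular fiber $X_t\cap B_0$ is not known a priori --- $B_0$ is a Milnor ball for $F_0$, not for $F_t$, so one cannot invoke local conic structure, and its simple connectivity is precisely the kind of statement that needs the decomposition you are trying to establish. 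With these spaces your short exact sequence in (W4) degenerates to $0\to 0\to H_n(X_t\cap B_0)\to H_n(W,M_t)\to 0$, which is a tautology and forces nothing, and your van Kampen computation only reduces (W1) to the unproved claim $\pi_1(X_t\cap B_0)=1$. The correct inputs are the genuine Milnor fibers $F_t=F_t^{-1}(\delta)\cap B_t$ and $F_0$, together with the nontrivial geometric fact that $F_0$ is homeomorphic to $F_t\cup_{M_t}W$; this identification rests on the equivalence of the Milnor fibration over the circle with the fibration over a disk (Hamm) and on the continuity of the family, and it is the step your proposal silently replaces by an incorrect identification of the singular fibers with the smooth ones. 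Once $F_0\cong F_t\cup_{M_t}W$ is in place, your van Kampen and homological arguments go through verbatim with $F_t,F_0$ in place of $X_t\cap B_t,\,X_t\cap B_0$.
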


\begin{remark}
Proposition~\ref{prop:W0-W4} is well known to the experts, for a convenience of the reader we sketch the proofs or give references.

\noindent (W0) is obvious. For $n>2$, (W1) is \cite[Theorem 6.4]{Mi68}. (W1) for $n=2$ and (W4) in the general case
can be found in \cite[proof of Theorem 2.1]{LR76}. The main idea is to consider the Milnor fibers $F_t$ and $F_0$
for the singularities $(X_t,0)$ and $(X_0,0)$. By \cite[Theorem 6.5]{Mi68}, $F_t$ has the homotopy type of a wedge of $\mu_t$ spheres $S^n$, and $F_0$
has the homotopy type of a wedge of $\mu_0$ spheres $S^n$.
Using the equivalence of the Milnor fibration over circle and over a disk (see e.g. \cite[Satz 1.5]{Ham71})
we infer that $F_t\cup_{M_t} W$ is homeomorphic $F_0$. Now if $n=2$, then $F_t$ and $F_0$ are simply connected, hence we get (W1) by the van Kampen theorem.

If $\mu_t=\mu_0$, then $F_t$ and $F_0$ have the same homotopy type. Since the homology groups of $F_t$ and $F_0$ are zero in all dimensions but $0$ and $n$,
the standard homological arguments yield (W4).

The property (W2) is proved in \cite{AnF59}. Finally, (W3) follows from \cite{Ne81}.
\end{remark}

As it was written in \cite{LR76},
in case $n>2$, the conditions (W0), (W1) and (W4)
imply that $W$ is an $h$-cobordism and since $\dim_{\R}W\geq 6$ we can appeal to the $h$-cobordism theorem of  Smale (see \cite{Sm62,Mi65})
which shows that $W$ is in fact a product, which in turn
implies that the singularities $(X_t,0)$ and $(X_0,0)$ are topologically equivalent.
In case $n=2$, neither of the manifolds $M_t$ and $M_0$
is simply connected, nor does the Whitney trick work (compare \cite[Section 9.2]{GS99}). However, since
the graph 3-manifolds are somehow rigid, it is still natural, though, to ask the following question.

\begin{question}[Topological $\mu$-constant problem]\label{qu:estion2}
Let $(W,M_t,M_0)$ satisfy conditions $($W0$)$--$($W4$)$ for $n=2$. Does it imply that $M_0$ and $M_t$ are homeomorphic?
\end{question}

As is pointed out in \cite[p.~3]{PS99}, the result of Levine \cite[Theorem 3]{Lv70}
implies, that if $M_0$ and $M_t$ are homeomorphic, then the singular points $(X_t,0)$
and $(X_0,0)$ are topologically equivalent. The key element of this observation is the fact that
$M_0$ and $M_t$ are simple knots by \cite[Lemma 6.4]{Mi68} and the fact, see e.g. \cite[Corollary 1.3]{Sae00},
that $M_0$ and $M_t$ have equivalent Seifert matrices.
In \cite[Proposition~0.5]{PS99} (see also \cite[p.~1180]{AsF11}) the following theorem was proved.

\begin{proposition}
Question~\ref{qu:estion2} has  an affirmative answer if we additionally assume that $\pi_1(M_t)$ surjects onto $\pi_1(W)$.
\end{proposition}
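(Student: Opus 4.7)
The plan is to show that the additional hypothesis upgrades the homological data of (W4) to a genuine isomorphism of fundamental groups on both sides of $W$, and then invoke the rigidity of Haken graph $3$-manifolds to pass from a $\pi_1$-isomorphism to a homeomorphism.

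First, I would attempt to simplify the handle decomposition given by (W2). By (W4) the relative complex $C_*(W,M_t;\Z)$ is acyclic; in particular, $W$ has the same components as $M_t$, so $0$-handles may be cancelled against $1$-handles. The crucial new input is the surjection $\pi_1(M_t)\twoheadrightarrow\pi_1(W)$: every free generator introduced by a remaining $1$-handle is trivial in $\pi_1(W)$, so it must be killed, modulo the $2$-handle relators, by some loop in $M_t$. Combined with the acyclicity of $C_*(W,M_t)$ this yields an algebraic pairing of the surviving $1$-handles with a matching set of $2$-handles. Even without a literal geometric cancellation, this algebraic bookkeeping ought to force $\pi_1(M_t)\to\pi_1(W)$ to be injective, and hence an isomorphism.

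Next I would apply the analogous reasoning from the $M_0$ side. Turning $W$ upside down, it is built from $M_0\times[0,1]$ using handles of indices $2,3,4$; such handles can only kill elements of $\pi_1$, so $\pi_1(M_0)\twoheadrightarrow\pi_1(W)$ comes for free. Running the symmetric version of the previous paragraph, using (W4) for the pair $(W,M_0)$, should give $\pi_1(M_0)\cong\pi_1(W)$. Putting the two isomorphisms together produces $\pi_1(M_t)\cong\pi_1(M_0)$. Since by (W3) both $M_t$ and $M_0$ are closed, oriented, irreducible graph manifolds, they are aspherical and Haken (the finitely many exceptional Seifert-fibered cases — lens-space-type links — can be excluded or treated separately by hand). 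Waldhausen's rigidity theorem for closed Haken $3$-manifolds then promotes the $\pi_1$-isomorphism to a homeomorphism $M_0\cong M_t$, as required.

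The hard part will be the injectivity of $\pi_1(M_t)\to\pi_1(W)$. Surjectivity is assumed, but the kernel is not visibly under control: a direct geometric cancellation of the $1$/$2$-handle pairs would need the Whitney trick, which is unavailable in dimension $4$ for general groups. A plausible workaround is Stallings' theorem on the lower central series, since (W4) implies isomorphisms on each nilpotent quotient; this would give injectivity provided $\pi_1(M_t)$ has a suitable residual property, which must be pulled from the specific structure of graph-manifold groups. An alternative is to work directly with $\Z[\pi_1(W)]$-twisted coefficients to detect the homotopy type of $W$ relative to $M_t$, bypassing any literal handle-cancellation step.
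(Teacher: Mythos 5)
There is a genuine gap, and it sits exactly where you flag it yourself: the injectivity of $\pi_1(M_t)\to\pi_1(W)$. Nothing in (W0)--(W4) together with the assumed surjection controls the kernel of this map. Ordinary homology cannot detect it, and neither of your proposed repairs works. Stallings' theorem only identifies the nilpotent quotients of $\pi_1(M_t)$ and $\pi_1(W)$, and for the groups relevant here this carries essentially no information: graph manifold groups are in general very far from residually nilpotent (links of singularities are frequently rational or even integral homology spheres, so all nilpotent quotients are finite or trivial; already for a knot group the lower central series stabilizes at the commutator subgroup). The twisted-coefficient variant fares no better, since (W4) gives acyclicity of $C_*(W,M_t)$ only with $\Z$ coefficients; acyclicity over $\Z[\pi_1(W)]$ is an extra hypothesis essentially equivalent to what you are trying to prove. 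And the ``algebraic pairing of $1$-handles with $2$-handles'' is precisely the point where one would need the Whitney trick, which fails in dimension $4$ --- this failure is the reason the $\mu$-constant problem is open for $n=2$ in the first place. Note also that the paper does not prove this proposition itself: it quotes it from \cite{PS99} (see also \cite{AsF11}), where the proof is a long structural analysis of graph manifolds and of the decompositions induced by the cobordism, not an argument that the boundary inclusions are $\pi_1$-isomorphisms. The injectivity you defer to ``the hard part'' is the entire content of the theorem.

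Two smaller remarks. Your observation that $\pi_1(M_0)\to\pi_1(W)$ is surjective by turning the handle decomposition upside down is correct (the paper uses the same argument in its verification of (W1)). But even granting both $\pi_1$-isomorphisms, the final step needs more care than excluding ``lens-space-type'' cases by hand: Question~\ref{qu:estion2} does not assume that $M_0$ and $M_t$ are links of singularities, only that they are oriented irreducible graph manifolds, so the non-Haken Seifert fibered cases (lens spaces and certain small Seifert manifolds, which are not determined by their fundamental groups) are genuinely part of the statement and must be handled, not just mentioned. In summary, the skeleton (two $\pi_1$-isomorphisms followed by Waldhausen rigidity) would yield the proposition if the injectivity could be established, but your sketch does not supply it and the tools you name cannot.
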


The main goal of this note, and actually the content of next section is the following result.

\begin{theorem}\label{mainthm}
Question~\ref{qu:estion2} has a negative answer.
\end{theorem}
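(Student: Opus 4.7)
The plan is to exhibit a concrete counterexample: a 4-dimensional cobordism $W$ satisfying (W0)--(W4) between two oriented irreducible graph 3-manifolds $M_t$ and $M_0$ that are not homeomorphic. The natural reservoir of easy-to-compare graph manifolds is provided by the lens spaces. I would look for two non-homeomorphic lens spaces $L(p,q)$ and $L(p,q')$ (that is, $q'\not\equiv\pm q^{\pm1}\pmod p$, so for instance $L(7,1)$ and $L(7,2)$) and try to build a $W$ between them. Being Seifert fibered, they are immediately oriented irreducible graph manifolds, so (W0) and (W3) are free.

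To manufacture $W$, the idea is to take $M_t\times[0,1]$ and attach a small handlebody of index $\le 2$, so that (W2) holds by construction. The handle decomposition should behave like a cork: a single $1$-handle introduces an extra $\Z$ in $\pi_1$ and $H_1$, and a companion $2$-handle is attached along a loop running once through the co-core of that $1$-handle in such a way that (i) the pair algebraically cancels in homology, yielding $H_1(W)=\Z/p$ and $H_2(W)=0$, and (ii) the attaching word expresses the new $\pi_1$-generator as a product of conjugates of classes in $\pi_1(M_t)$, so that the image of $\pi_1(M_t)\to\pi_1(W)$ normally generates. Any additional $2$-handles are chosen to realise the Dehn surgery on $M_t$ whose outcome is $M_0=L(p,q')$; their framings and attaching circles are dictated by a slam-dunk / continued-fraction expansion that converts the slope of one lens-space surgery into the other.

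Once the diagram is drawn, verifying (W4) reduces to a chain-level computation: the handle chain complex of $(W,M_t)$ should be acyclic, which by Poincar\'e--Lefschetz duality immediately gives the homology isomorphism from the $M_0$ side as well. Property (W1) is then automatic from the construction: $\pi_1(W)$ is generated by $\pi_1(M_t)$ together with the $1$-handle generator, and the companion $2$-handle's relation places this extra generator into the normal closure of $\pi_1(M_t)$.

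The principal obstacle is the geometric part of the construction: producing a Kirby picture in which the top boundary is the \emph{specific} lens space $L(p,q')\neq L(p,q)$ while the handles cancel homologically \emph{and} $\pi_1(M_t)$ normally generates $\pi_1(W)$. Realising all three constraints simultaneously forces one to work with a carefully chosen small example and verify the surgery output by explicit Rolfsen / slam-dunk moves. Assuming such a diagram can be drawn, the non-homeomorphism of $M_t$ and $M_0$ follows from Reidemeister's classification of lens spaces, completing the proof of Theorem~\ref{mainthm}.
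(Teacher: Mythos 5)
Your proposal is a plan rather than a proof: the entire content of Theorem~\ref{mainthm} is the explicit construction of a cobordism satisfying (W0)--(W4), and you defer exactly that step (``assuming such a diagram can be drawn''). Worse, the lens-space route cannot work, for a reason already contained in the paper's setup. Suppose $M_t=L(p,q)$ and $M_0=L(p,q')$. By (W2), turning the handle decomposition upside-down shows that $W$ is obtained from $M_0\times[0,1]$ by attaching handles of index $2,3,4$, so $\pi_1(M_0)\to\pi_1(W)$ is surjective; since $\pi_1(M_0)\cong\Z/p$ is abelian, $\pi_1(W)$ is abelian. But then the normal closure of the image of $\pi_1(M_t)$ in $\pi_1(W)$ coincides with the subgroup it generates, so (W1) forces $\pi_1(M_t)\to\pi_1(W)$ to be \emph{surjective}. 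By the proposition of Perron--Shalen quoted in the introduction (\cite[Proposition~0.5]{PS99}), Question~\ref{qu:estion2} has an affirmative answer under exactly this hypothesis, so $M_0$ and $M_t$ would be homeomorphic --- contradicting the whole point of choosing $q'\not\equiv\pm q^{\pm1}\pmod p$. The same argument shows that \emph{any} counterexample must have at least one boundary component with non-abelian fundamental group, and must be engineered so that $\pi_1(M_t)$ normally generates $\pi_1(W)$ without surjecting onto it. (Independently of this, an integral homology cobordism between lens spaces is further obstructed by linking forms and Heegaard Floer correction terms, so even the homological part of your construction is far from routine.)

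The paper's construction is designed precisely to thread this needle. Take a nontrivial torus knot $K$, let $L=K\#-K$, which bounds a ribbon disk $D\subset B^4$, remove a small ball $B'$ meeting $D$ in an unknotted disk, set $X=B^4\sm(B'\cup\nu D)$, and let $W$ be the double of $X$ along $\partial X\cap \nu D$. Then $M_t=S^1\times S^2$ and $M_0$ is the double of the exterior of $L$: (W4) follows from Alexander duality plus Mayer--Vietoris, (W2) from the ribbon condition (only index $0$ and $1$ critical points on $D$), (W3) from Seifert-fiberedness of torus knot exteriors, and (W1) from the fact that the meridian of the unknot $D\cap\partial B'$ is homotopic through meridional disks of $D$ to a meridian of $L$, which normally generates the knot group. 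Here $\pi_1(M_t)=\Z$ maps to a single meridian whose normal closure is everything, but it does not surject onto the non-abelian group $\pi_1(W)$, which is exactly how the Perron--Shalen criterion is evaded.
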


%===========================================

\section{A negative answer to Question~\ref{qu:estion2}}

\subsection{The construction}
Let $K\subset S^3$ be a
non-trivial torus knot $T(p,q)$. We consider $L=K\#-K$, where $-K$ is the mirror image of $K$ with the opposite orientation. Then it is well-known
 (see e.g. \cite[p.~210-213]{GS99}) that $L$ bounds a ribbon disk $D$ in $B^4$.
This implies that there exists an open ball $B'\subset B^4$ with the same center and smaller radius, such that $\p \overline{B'}\cap D$ is an unknot, the distance
function on $D\cap (B^4\sm B')$ is Morse (for this we might need to move slightly the common center of the two balls)
and has only critical points of index $0$ and $1$, and $D\cap (B^4\sm B')$ is an annulus.

Let $\nu D $ be an open tubular neighbourhood of $D$. We define $X=B^4\sm (B'\cup\nu D)$. Let $Y=\p X\cap\nu D$. We define now
\[W=X\cup_Y -X,\]
i.e. we take a double of $X$ along $Y$.

\begin{lemma}
The boundary of $W$ is a disjoint union of $S^1\times S^2$ and the double of $S^3\sm \nu L$.
\end{lemma}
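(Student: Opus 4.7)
The plan is to decompose $\partial X$ into three codimension-zero faces meeting along two tori, identify $Y$ as one of them, and observe that doubling $X$ along $Y$ simply leaves the other two faces to be doubled along their boundary tori.

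First I would give an explicit description of $\partial X$. After smoothing corners, the compact $4$-manifold $X = B^4\setminus(B'\cup\nu D)$ has boundary decomposing as $\partial X = \Sigma_L \cup_{T_L} Y \cup_{T_U} \Sigma_U$, where the three pieces are as follows. The outer face $\Sigma_L := \partial B^4 \setminus \nu L$ is the knot exterior $S^3\setminus\nu L$, with boundary torus $T_L = \partial\nu L$. The inner face $\Sigma_U := \partial B' \setminus \nu U$, where $U = \partial\overline{B'}\cap D$, is a solid torus, since $U$ is an unknot in $\partial B'\cong S^3$, with boundary torus $T_U = \partial\nu U$. Finally, $Y$ is the intersection of $\partial\nu D$ with $\overline{B^4\setminus B'}$; the hypothesis that $D\cap(B^4\setminus B')$ is an annulus $A$ with $\partial A = L\sqcup U$ forces $Y\cong A\times S^1 \cong T^2\times I$, with $\partial Y = T_L\sqcup T_U$.

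Next I would compute $\partial W$. By construction $W = X\cup_Y -X$ identifies the two copies of $Y$ via the identity, which eliminates $Y$ from the boundary but retains two copies of $\Sigma_L\cup\Sigma_U$. The identification along $\partial Y = T_L\sqcup T_U$ then glues the $T_L$-boundary of $\Sigma_L\subset\partial X$ to the $T_L$-boundary of $\Sigma_L\subset\partial(-X)$, and similarly for $T_U$. Therefore $\partial W$ is the disjoint union of two closed $3$-manifolds, namely the double of $\Sigma_L = S^3\setminus\nu L$ along $T_L$, which by definition is the double of the exterior of $L$, and the double of $\Sigma_U$ along $T_U$. For the latter, doubling a solid torus along its boundary gives
\[
(S^1\times D^2)\cup_{S^1\times S^1}(S^1\times D^2) \;=\; S^1\times(D^2\cup_{S^1} D^2) \;=\; S^1\times S^2,
\]
which completes the identification.

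No substantial obstacle arises; the argument is essentially bookkeeping about the three faces of $\partial X$. The one point requiring a bit of care is verifying that $Y\cong T^2\times I$, which uses the assumption that inside $B^4\setminus B'$ the ribbon disk $D$ restricts to an unknotted annulus; and checking that the self-gluing along $Y$ induces on each torus of $\partial Y$ exactly the identity identification needed to produce the claimed doubles.
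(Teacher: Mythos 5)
Your argument is correct and is essentially the paper's own proof, only spelled out in more detail: the paper likewise observes that $\partial W$ splits as the double of $S^3\setminus\nu L$ together with the double of $\partial \overline{B'}\setminus\nu J$, and identifies the latter as the double of a solid torus, hence $S^1\times S^2$. Your explicit bookkeeping of the three faces of $\partial X$ and of $Y\cong T^2\times I$ fills in what the paper dismisses as following ``immediately from the definitions.''
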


\begin{proof}
We write $S'=\partial \ol{B'}$ and $J:=D\cap S'$. Note that $J$ is the unknot.
It follows immediately from the definitions that $\partial W$ is the disjoint union of
the double of $S^3\sm \nu L$ and the double of $S'\sm \nu J$.
The knot $J\subset S'$ is the unknot, i.e. $S'\sm \nu J$ is a solid torus. The double of $S'\sm \nu J$
is thus canonically homeomorphic to $S^1\times S^2$.
\end{proof}

We define $M_t=S^2\times S^1$ and $M_0$ as the double of $S^3\sm \nu L$.
Since $L$ is non-trivial it is clear that $M_t$ and $M_0$ are non-homeomorphic.
In the next section we will see that the triple $(W,M_t,M_0)$ satisfies conditions (W0) to (W4),
which thus gives us a proof of Theorem \ref{mainthm}.

\subsection{Proof of (W0)--(W4)}

The property (W0) is obvious.
It is a straightforward consequence of Alexander duality and the Mayer--Vietoris sequence that
the maps $H_*(M_0;\Z)\to H_*(W;\Z)$ and $H_*(M_t;\Z)\to H_*(W;\Z)$ induced by inclusions are isomorphisms.
This proves that  (W4) is satisfied.

Let us now show (W2). We use the theory of embedded handle calculus as in \cite[Section 6.2]{GS99}. Namely,
the function `distance from the origin' on  $B^4\sm B'$ has only critical points of index $0$ and $1$, when restricted to $D\cap (B^4\sm B')$.
It follows from \cite[Proposition 6.2.1]{GS99} that $X$ can be built from $\p B'\sm D$ by adding only handles of index $1$ and $2$.
By taking the double we obtain that $W$ is built from $S^2\times S^1$ by adding only handles of index $1$ and $2$ as desired.

We now turn to the proof of (W1). Let $x$ be a generator of $\pi_1(S^1\times S^2)$
which we can represent by a meridian of the unknot $J=D\cap S'$.
We claim that $x$ normally generates $\pi_1(W)$. We denote by $\Gamma$ the smallest normal subgroup
of $\pi_1(W)$ which contains $x$. We thus have to show that in fact $\Gamma=\pi_1(W)$.
First note that the meridian of $J$ is homotopic in $X$, via meridians of the ribbon disk, to a $y$ meridian of the knot $L$.
It is well-known that a meridian normally generates a knot group.
We thus see that $\operatorname{Im}(\pi_1(S^3\sm \nu L)\to \pi_1(W))\subset \Gamma$.
Note that $\pi_1(M_0)$ is generated by the fundamental groups of the two knot exteriors which are glued together.
We now see that  $\operatorname{Im}(\pi_1(M_0)\to \pi_1(W))\subset \Gamma$.
It follows from (W2) that $W$ is obtained from $M_t\times [0,1]$ by adding handles of indices $0,1,2$.
By turning the handle decomposition `upside-down' we see that we can obtain $W$ from $M_0\times [0,1]$
by adding handles of indices $2,3,4$. This implies in particular that $\pi_1(M_0)\to \pi_1(W)$ is surjective.
It now follows that $\pi_1(W)=\Gamma$ as desired.

We finally turn to the proof of (W3).
It is well-known that $S^3\sm \nu K$ is a Seifert fibered space.
Furthermore, we can obtain $S^3\sm \nu L=S^3\sm \nu K\# -{K}$ by
gluing $S^3\sm \nu K$ and $S^3\sm \nu -{K}$ along their boundaries to $S^1\times \Sigma$
where $\Sigma$ is a pair of pants, i.e. $\Sigma$ is obtained by removing three open disks from $S^2$.
It now follows that $S^3\sm \nu L$ is a graph manifold. Finally $M_0$ is obtained by gluing two graph manifolds along their boundary,
which shows that $M_0$ is also a graph manifold.

\subsection{Further properties of $(W,M_t,M_0)$}\label{sec:further}

To conclude, we point out that if the triple $(W,M_t,M_0)$
arises from singularity theory, then the following conditions (W5), (W6) and (W7) are additionally satisfied.
The properties (W6) and (W7) are not of topological nature. We do not know, if all the properties
(W0)--(W7) together, enforce the cobordism $W$ to be a product.

\begin{itemize}
\item[(W5)] The manifolds $M_t$ and $M_0$ are negative, in other words there exists a negative definite plumbing diagram for each of them,
see \cite{Ne81}.
\item[(W6)] The manifold $W$ carries a canonical symplectic form, coming from the K\"ahler structure on it.
The boundary $M_0$ is convex, while $M_t$ is concave with respect to that form. The contact structures induced on $M_t$ and $M_0$ are
Milnor fillable (see \cite{Var80,CNP06}).
\item[(W7)] The manifold $W$ is Stein. Furthermore, $M_t$ is the pseudo-convex part of its boundary and the manifold $M_0$ is pseudo-concave.
\end{itemize}

\begin{ack}
We wish to thank Charles Livingston, Andr\'as N\'emethi and Piotr Przytycki for valuable discussions.
This project was started while we were both visiting Indiana University and we would like to express our gratitude
for hospitality of the Mathematics Department
of Indiana University. We are also grateful to the anonymous referee for carefully reading an earlier version of this paper.
\end{ack}


\begin{thebibliography}{HKL00}

\bibitem[AnF59]{AnF59}
Aldo Andreotti and Theodore Frankel, {\em The Lefschetz theorem on hyperplane sections},
Ann. of Math. (2) 69 (1959), 713--717.

\bibitem[AsF11]{AsF11} Matthias~Aschenbrenner and Stefan~Friedl, \emph{Residual properties of graph manifold groups}, Topology App. \textbf{158} (2011),
no 10, 1179--1191.

\bibitem[CNP06]{CNP06} Cl\'ement Caubel, Andr\'as~N\'emethi and  Patrick Popescu-Pampu,
\emph{Milnor open books and Milnor fillable contact 3-manifolds},
Topology \textbf{45} (2006), no. 3, 673--689.

\bibitem[GS99]{GS99} Robert~Gompf and Andr\'as~Stipsicz,
\textit{4-manifolds and Kirby calculus},
Graduate Studies in Mathematics. \textbf{20}. Providence, RI. 1999.

\bibitem[GLS06]{GLS06} Gert-Martin~Greuel, Christoph Lossen and Eugenii~Shustin,
\textit{Introduction to Singularities and Deformations}, Springer Verlag, Berlin, Heidelberg, New York 2006.

\bibitem[Ham71]{Ham71} Helmut~Hamm, \textit{Lokale topologische Eigenschaften komplexer R\"aume}, Math. Ann. \textbf{191} (1971), 235--252.

\bibitem[Le71]{Le71} L\^e D\~ung Tr\'ang,
\emph{Sur un crit\`ere d'\'equisingularit\'e},  C. R. Acad. Sci. Paris Ser. A--B \textbf{272} (1971), A138--A140.

\bibitem[Le72]{Le72} L\^e D\~ung Tr\'ang,
\emph{Sur les n\oe uds alg\'ebriques},
Compos. Math. \textbf{25} (1972), 281--321.

\bibitem[LR76]{LR76} L\^e D\~ung Tr\'ang and C.~Ramanujam,
\emph{The invariance of Milnor number implies the invariance of topological type},
Amer. J. Math, \textbf{98} (1976), no.1, 67--78.

\bibitem[Lv70]{Lv70} Jerome~Levine, \emph{An algebraic classification of some knots in codimension 2},
Comm. Math. Helv. \textbf{45} (1970), 185--198.

\bibitem[Mi65]{Mi65} John~Milnor, \emph{Lectures on the $h$-cobordism theorem.} Notes by L. Siebenmann and J. Sondow.
Princeton University Press, Princeton, N.J. 1965.

\bibitem[Mi68]{Mi68} John~Milnor, \emph{Singular points of complex hypersurfaces.} Annals of Mathematics Studies, No. 61 Princeton University Press,
Princeton, N.J.; University of Tokyo Press, Tokyo 1968.

\bibitem[Ne81]{Ne81} Walter~Neumann, \emph{A calculus for plumbing applied to the topology of complex surface singularities and degenerating complex curves},
Trans. Am. Math. Soc. \textbf{268} (1981), 299--343.

\bibitem[PS99]{PS99} Bernard~Perron and Peter~Shalen, \emph{Homeomorphic graph manifolds: a contribution to the $\mu$-constant
problem}, Topology Appl. \textbf{99} (1999), no. 1, 1--39.

\bibitem[Sae00]{Sae00} Osamu~Saeki,
\emph{Real Seifert form determines the spectrum for semiquasihomogeneous hypersurface singularities in C3}
J. Math. Soc. Japan \textbf{52} (2000), no. 2, 409--431.

\bibitem[Sm62]{Sm62} Stephen~Smale,
\textit{On the structure of manifolds}
Amer. J. Math. \textbf{84} (1962), 387--399.

\bibitem[Te76]{Te76} Bernard~Teissier, \emph{The hunting of invariants in the geometry of discriminants} in:
Real and complex singularities (Proc. Ninth Nordic Summer School/NAVF Sympos. Math., Oslo, 1976), 565--678.

\bibitem[Var80]{Var80} Alexander~Varchenko, \emph{Contact structures and isolated singularities},
Vestn. Mosk. Univ., Ser. I, 1980, No.2, 18--21.

\bibitem[Za65a]{Za65a} Oscar~Zariski, \emph{Studies in equisingularity. I. Equivalent singularities of plane algebroid curves},
Amer. J. Math. \textbf{87} (1965), 507--536.

\bibitem[Za65b]{Za65b} Oscar~Zariski, \emph{Studies in equisingularity. II. Equisingularity in codimension $1$ (and characteristic zero)},
Amer. J. Math. \textbf{87} (1965) 972--1006.
,
\bibitem[Za68]{Za68} Oscar~Zariski, \emph{Studies in equisingularity. III. Saturation of local rings and equisingularity},
Amer. J. Math. \textbf{90} (1968), 961--1023.

\end{thebibliography}
\end{document}